 \numberwithin{equation}{section}
\newtheorem{theorem}{Theorem}[section]
\newtheorem{lemma}{Lemma}[section]
\newtheorem{corollary}[lemma]{Corollary}
\newtheorem{proposition}[lemma]{Proposition}
\newcommand{\R}{\mbox{$\Bbb R$}}
 \newcommand{\g}{{\mathfrak g}}
\def\n{\mathfrak{n}}
\def\g{\mathfrak{g}}
\def\i{\mathfrak{i}}
\def\l{\mathfrak{l}}
\def\n{\mathfrak{n}}
\def\o{\mathfrak{o}}
\def\s{\mathfrak{s}}
\begin{document}

\title{Minimal representations of Lie algebras with non-trivial Levi decomposition} \maketitle

\begin{center}

 {\bf Ryad Ghanam$^{1}$, Manoj Lamichhane$^{2}$  and Gerard Thompson$^{3}$}\\

$^1$ Department of Mathematics, Virginia Commonwealth University in Qatar\\
PO Box 8095, Doha, Qatar\\
raghanam@vcu.edu\\
$^2$ Department of Mathematics, Western Kentucky University\\ Bowling Green, KY 42101\\manoj.lamichhane@wku.edu\\
$^3$ Department of Mathematics, University of Toledo,\\ Toledo, OH
43606, U.S.A. \\  gerard.thompson@utoledo.edu \\

\end{center}

\maketitle

\begin{abstract}
We obtain minimal dimension matrix representations for each of the Lie algebras of dimensions five, six, seven and eight obtained by Turkowski that have a non-trivial Levi decomposition. The key technique involves using the invariant subspaces associated to a particular representation of a semi-simple Lie algebra to help the in the construction of the radical in the putative Levi decomposition.
\end{abstract}

\vspace{4cm}

\begin{center}
\hspace{1cm} Key words: semi-simple Lie algebra, minimal representation, radical, Levi decomposition, invariant subspaces.

\hspace{1cm} AMS Mathematics Subject Classification (2010): 22E60 17B05, 17B30, 20G05, 22E25.
\end{center}

\newpage

\section{Introduction} Given a real Lie algebra $\g$ of dimension $n$ a
well known theorem due to Ado asserts that $\g$ has a faithful
representation as a subalgebra of $\g\l(p,\R)$ for some $p$. In several recently published papers the authors and others have investigated the problem of finding minimal
dimensional representations of indecomposable dimensional Lie algebras of dimension five and less \cite{TW,KB,GT1,GT2}.
In two recent papers \cite{KB,GT1}, the problem of finding such a
minimal representation is considered for the four-dimensional Lie
algebras. In \cite{KB}, the main technique is somewhat indirect and
depends on a construction known as a left symmetric structure. In
\cite{GT1, GT2}, the minimal representations have been calculated directly
without the need for considering left symmetric structures.

Burde \cite{Bur} has defined an invariant $\mu(\g)$ of a Lie algebra $\g$ to be the dimension of its minimal faithful representation. So far minimal representations have been found for all the indecomposable and decomposable algebras of dimension five and less \cite{GT1, GT2}. In this paper we investigate the issue of  minimal representations for Lie algebras that have a non-trivial Levi decomposition. Such algebras have been classified in dimensions five to eight by Turkowski \cite{Turk}.
Of course it is interesting to ascertain the value of $\mu$ from a theoretical point of view. However, an important practical reason is that calculations involving symbolic programs such as Maple and Mathematica use up lots of memory when storing matrices; accordingly, calculations are likely to be faster if one can represent matrix Lie algebras using matrices of a small size. Besides the value of having explicit representations of low-dimensional Lie algebras they also add to the growing body of results that seek to provide alternatives to Ado's Theorem for the construction of representations \cite{Bur}.\\
In this paper we shall use the classification of the low-dimensional \emph{indecomposable} Lie algebras found in \cite{PSWZ} which is in turn taken from \cite{Mub1}. Such algebras are denoted as $A_{m.n}$ where $m$ denotes the dimension of the algebra and $n$ the $n$th one in the list. In dimension $2$, the algebra $A_{2.1}$ is the unique non-abelian algebra. The same notation is adopted in \cite{Mub2}, which concerns a class of six-dimensional algebras, and is used in \cite{Turk}, which is the most important reference for the current paper. We shall also use $R_n$ to denote the $n\times n$ irreducible representation of $\s\o(3)$ and $D_j$ for the irreducible representation of order $(2j+1)\times (2j+1)$ of $\s\l(2,\R)$ where $j$ is either an integer of half-integer. Furthermore, $kD_j$ denotes $k$ copies of $D_j$ and $D_0$ the trivial one-dimensional representation.

The outline of this paper is as follows. In Section 2 we consider the construction of abstract Lie algebras that have a Levi decomposition. In Section 3 we present several results about representations that will needed be in the sequel. In Section 4 we carry out the construction of the minimal representations proper. A key element here is the use of the invariant subspaces that come from the representation of the semi-simple factor, in all cases here either $\s\o(3)$ or $\s\l(2,\R)$ in a given representation, that acts by commutator on $\g\l(N,\R)$ for various small values of $N$, to construct a representation of the radical ideal of the algebra. It is argued in every case that the representations thus constructed are of minimal dimension. Finally, in Section 5 we give a list of  minimal dimension representations for each of the algebras in Turkowski's list \cite{Turk}.
Occasionally we give more than one such representation, sometimes depending on parameters $\lambda$ and $\mu$ if it seems to be of particular interest although we do not consider the difficult issue of the inequivalence of different representations.
Finally, the reader may object that we have not supplied details of the Turkowski Lie algebras but to do so would involve repeating large Sections of \cite{Turk}. In any case the
Lie brackets can be readily computed from the representations given in Section 5.

\section{Constructing Levi algebras in general}

Let us consider the problem of constructing a Lie algebra $\g$ that
has a Levi decomposition $\sigma \rtimes \rho$ in general. We have
the following structure equations: \begin{equation} \label{rep}
[e_a,e_b]=C^c_{ab}e_c, \, [e_a,e_i]=C^k_{ai}e_k, \,
[e_i,e_j]=C^k_{ij}e_k \end{equation} \noindent where $1 \leq a,b,c,d
\leq s$ and $s+1 \leq i,j,k,l \leq n$ and $\{e_a\}$ is a basis for
the semi-simple subalgebra $\sigma$ and $\{e_i\}$ is a basis for the
radical $\rho$. Calculation shows that the Jacobi identity is
equivalent to the following conditions:

\begin{center} $C^e_{[ab}C^d_{c]e}=0,  \,\, C^c_{ab}C^j_{ci}=
C^k_{bi}C^j_{ak}-C^k_{ai}C^j_{bk},$ \end{center}

\begin{equation}
C^k_{al}C^l_{ij}=C^l_{ai}C^k_{lj}-C^l_{aj}C^k_{li}, \, \,
C^l_{[ij}C^m_{k]l}=0. \label{struc_eq} \end{equation}

We interpret these conditions as follows: we start with a
semi-simple algebra so that the first set of conditions above is
satisfied. Then the second set say that the matrices $C^k_{al}$ make
$\rho$ (merely as a vector space) into a $\sigma$-module. The third
set say that the $C^k_{al}$ are derviations of the Lie algebra
$\rho$ and the fourth of course that $\rho$ is a  Lie algebra.
Therefore to find all possible Lie algebras  of dimension $n$ that
have a Levi decomposition $\sigma \rtimes \rho$ we can proceed as
follows: choose a semi-simple algebra $\sigma$ of dimension $r$.
Then pick any solvable algebra $\rho$ of dimension $n-r$ and
consider a representation of $\sigma$ in $\rho$ considered simply as
the vector space $\R^{n-r}$, all of which are known and are
completely reducible \cite{Hum} since $\sigma$ is semi-simple.
Of course it is a difficult question as to whether a representation of a
particular semi-simple algebra exists in a certain dimension: see \cite{Hum} for more details.
Finally, it only remains to check that the matrices representing
$\sigma$ act as derivations of the Lie algebra $\rho$. In the
affirmative case we have our sought after Lie algebra; in the
negative case there is no such algebra and we have to choose a
different representation of $\sigma$ in $\rho$. If all such
representations lead to a null result then there can be no non-trivial Levi
decomposition involving $\sigma$ and $\rho$.

\section{Representation results}

We quote the following result from Humphreys \cite{Hum}.
\begin{proposition} \label{Hump}
Let $L$ be a finite dimensional complex Lie algebra acting irreducibly on the vector space $V$. Then $L$ is reductive and the center is of dimension one or zero. If in addition $L\subset \s\l(V)$ then $L$ is semi-simple.
\end{proposition}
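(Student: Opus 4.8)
The plan is to run the classical argument built on Lie's theorem, exploiting that the ground field is $\C$. Write $\mathrm{rad}\,L$ for the radical of $L$ and let $\phi\colon L\to\g\l(V)$ be the given action. Applying Lie's theorem to the solvable subalgebra $\mathrm{rad}\,L$ acting on $V$ produces a common eigenvector, and hence a linear functional $\lambda\colon\mathrm{rad}\,L\to\C$; set $V_\lambda=\{v\in V: x\cdot v=\lambda(x)v\ \text{for all}\ x\in\mathrm{rad}\,L\}$, a nonzero subspace of $V$.

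The central point is to show that $V_\lambda$ is an $L$-submodule. For $y\in L$, $v\in V_\lambda$ and $x\in\mathrm{rad}\,L$ one has
\[ x\cdot(y\cdot v)=y\cdot(x\cdot v)+[x,y]\cdot v=\lambda(x)\,y\cdot v+\lambda([x,y])\,v, \]
so everything reduces to the invariance lemma $\lambda\big([L,\mathrm{rad}\,L]\big)=0$. This is the one step that is not pure bookkeeping, and I would prove it by the standard trace computation: fix $y\in L$, $x\in\mathrm{rad}\,L$, $v\in V_\lambda$, and let $U$ be the span of $v,\ y\cdot v,\ y^2\cdot v,\dots$. An induction on the natural filtration of $U$ — using that $[L,\mathrm{rad}\,L]\subseteq\mathrm{rad}\,L$ and that $v$ is a $\lambda$-eigenvector for \emph{all} of $\mathrm{rad}\,L$ — shows that every $w\in\mathrm{rad}\,L$ preserves $U$ and acts there as $\lambda(w)$ times the identity modulo lower-order terms, so $\mathrm{tr}_U(w)=(\dim U)\lambda(w)$. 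Since $y$ and $x$ each preserve $U$, the element $[y,x]\in\mathrm{rad}\,L$ has trace zero on $U$; as the characteristic is zero this forces $\lambda([y,x])=0$. This is exactly the lemma underlying Humphreys' proof of Lie's theorem, so one may simply cite it.

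Granting the lemma, $V_\lambda$ is a nonzero submodule, hence equals $V$ by irreducibility, so each $x\in\mathrm{rad}\,L$ acts on $V$ as the scalar $\lambda(x)$. Then $\phi([L,\mathrm{rad}\,L])=[\phi(L),\phi(\mathrm{rad}\,L)]=0$ since $\phi(\mathrm{rad}\,L)$ consists of scalar operators; when the action is faithful (the relevant case, in particular when $L\subseteq\g\l(V)$ to begin with) this yields $[L,\mathrm{rad}\,L]=0$, i.e. $\mathrm{rad}\,L\subseteq Z(L)$. Since always $Z(L)\subseteq\mathrm{rad}\,L$, we get $\mathrm{rad}\,L=Z(L)$, which is the definition of $L$ being reductive; moreover $\lambda$ is injective on $Z(L)$ (if $\lambda(z)=0$ then $\phi(z)=0$, so $z=0$), whence $\dim Z(L)\le 1$. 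Finally, if $L\subseteq\s\l(V)$ then each $z\in Z(L)=\mathrm{rad}\,L$ acts as $\lambda(z)\,\mathrm{id}_V$ with trace zero, so $\lambda(z)\dim V=0$, hence $\lambda(z)=0$, hence $z=0$; thus $Z(L)=\mathrm{rad}\,L=0$ and $L$ is semisimple.

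I expect the trace argument giving $\lambda([L,\mathrm{rad}\,L])=0$ to be the only genuine obstacle: once the radical is known to act by scalars, reductivity, the bound on the center, and semisimplicity in the $\s\l(V)$ case all follow immediately from the definitions.
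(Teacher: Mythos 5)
Your proof is correct. The paper gives no proof of this proposition at all — it is quoted verbatim from Humphreys — and your argument (Lie's theorem applied to $\mathrm{rad}\,L$, the trace lemma $\lambda([L,\mathrm{rad}\,L])=0$ to make the weight space $L$-invariant, hence the radical acts by scalars and coincides with the centre, with the $\s\l(V)$ case killed by the trace condition) is precisely the standard proof from that source; your remark that the hypothesis must be read as a faithful action, i.e. $L\subseteq\g\l(V)$, is the correct reading, since without faithfulness the conclusion fails (a non-abelian solvable algebra acting through a one-dimensional quotient acts irreducibly on $\C$ but is not reductive).
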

\noindent Notice that the condition of semi-simplicity is a conclusion not an assumption. In practice we apply this result to semi-simple algebras to deduce that for a particular representation there is only a trivial one-dimensional extension to a Levi decomposition or more generally to an irreducible block in a reducible representation.

The following two results lie at opposite extremes.

\noindent \begin{theorem} \label{ab_rep_thm} Suppose that the
Lie algebra $\g$ is a semi-direct product of a semi-simple subalgebra $\sigma$ and  an
$r$-dimensional abelian ideal $\rho$ in the Levi decomposition. Then $\g$ has a (faithful)
representation as a subalgebra of $gl(r+1, \Bbb R)$.

Conversely, a subalgebra of $gl(r+1, \Bbb R)$ with the upper $r \times r$ block giving a representation of semi-simple Lie algebra and the first $r$ entries in the last column being arbitrary and the $(r+1)$th zero, gives a Levi subalgebra of $gl(r+1, \Bbb R)$ whose radical is abelian.
\end{theorem}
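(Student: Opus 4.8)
For the forward direction I would build the representation explicitly on $\R^{r+1}=\R^{r}\oplus\R$. Identify $\rho$ with $\R^{r}$ and let $\phi\colon\sigma\to\g\l(r,\R)$ be the representation of $\sigma$ on $\rho$ coming from the brackets $[e_{a},e_{i}]$, so that in $\g=\sigma\ltimes\rho$ one has $[a,v]=\phi(a)v$ and $[v,w]=0$ for $a\in\sigma$ and $v,w\in\rho$. I would then define $\Phi\colon\g\to\g\l(r+1,\R)$ by
\[
\Phi(a)=\begin{pmatrix}\phi(a)&0\\0&0\end{pmatrix},\qquad
\Phi(v)=\begin{pmatrix}0&v\\0&0\end{pmatrix}\qquad(a\in\sigma,\ v\in\rho),
\]
where in $\Phi(v)$ the column vector $v$ sits in the last column, and extend by linearity. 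The entire check that $\Phi$ is a homomorphism reduces to the single block identity $\bigl(\begin{smallmatrix}A&v\\0&0\end{smallmatrix}\bigr)\bigl(\begin{smallmatrix}B&w\\0&0\end{smallmatrix}\bigr)=\bigl(\begin{smallmatrix}AB&Aw\\0&0\end{smallmatrix}\bigr)$, which gives
\[
\left[\begin{pmatrix}A&v\\0&0\end{pmatrix},\begin{pmatrix}B&w\\0&0\end{pmatrix}\right]=\begin{pmatrix}AB-BA&Aw-Bv\\0&0\end{pmatrix};
\]
comparing this against the brackets $[a,b]$, $[a,v]=\phi(a)v$ and $[v,w]=0$ term by term shows $\Phi([x,y])=[\Phi(x),\Phi(y)]$. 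For faithfulness, $\Phi(a+v)=0$ forces $\phi(a)=0$ and $v=0$, so $\ker\Phi=\ker\phi$, and $\Phi$ is faithful precisely when $\sigma$ acts faithfully on $\rho$. I would observe that this is automatic when $\g$ is indecomposable: if $\sigma_{0}:=\ker(\sigma\to\mathrm{Der}\,\rho)$ were nonzero, then writing $\sigma=\sigma_{0}\oplus\sigma_{1}$ as a sum of ideals of $\sigma$ one has $[\sigma_{0},\sigma_{1}]=0=[\sigma_{0},\rho]$, whence $\g=\sigma_{0}\oplus(\sigma_{1}\ltimes\rho)$ splits as a sum of nonzero ideals (nonzero since $\rho\neq0$), contradicting indecomposability; this covers all the algebras treated in the paper.

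For the converse, let $\mathfrak{k}$ be the subalgebra of $\g\l(r+1,\R)$ consisting of all matrices of this same block shape, i.e. with last row zero, last column an arbitrary vector of $\R^{r}$, and upper-left $r\times r$ block lying in the image $\s$ of a given representation of a semi-simple Lie algebra $\sigma$ (the image of the map $\Phi$ above is exactly such a $\mathfrak{k}$, so the two halves are genuinely reciprocal). The same block identity shows at once that $\mathfrak{k}$ is closed under the bracket; that the set $\mathfrak{a}\subset\mathfrak{k}$ of matrices with zero upper-left block is an ideal of $\mathfrak{k}$, and is abelian because the product of any two of its elements is zero; and that the matrices with zero last column form a subalgebra isomorphic to $\s$. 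Since $\mathfrak{a}$ is solvable and $\mathfrak{k}/\mathfrak{a}\cong\s$ is semi-simple, $\mathfrak{a}$ is forced to be the radical of $\mathfrak{k}$, and hence $\mathfrak{k}=\s\ltimes\mathfrak{a}$ is a Levi decomposition with abelian radical, which is the assertion.

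Everything above is driven by that one block multiplication, so there is no hard computation; the two points that I expect to need care are the faithfulness of $\Phi$ in the forward direction (handled by the indecomposability reduction just given, or simply assumed as a hypothesis on the action) and, in the converse, the claim that the evident complement $\mathfrak{a}$ really is the radical of $\mathfrak{k}$ rather than merely some solvable ideal — this rests on the characterisation of the radical as the unique maximal solvable ideal together with the semisimplicity of the quotient $\mathfrak{k}/\mathfrak{a}$. Neither is difficult, but both deserve to be spelled out.
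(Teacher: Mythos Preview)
Your construction is exactly the one the paper gives: embed $\sigma$ in the top-left $r\times r$ block via the adjoint action on $\rho$, put the abelian ideal in the last column, and verify the brackets by the block product identity. The paper's proof does precisely this in coordinates, writing $E_a$ for the augmented matrix $C^k_{ai}$ and $E_k$ for the elementary matrix with a $1$ in position $(k,r+1)$.

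You actually go further than the paper in two respects. First, the paper does not prove the converse at all; your argument that $\mathfrak a$ is the radical because $\mathfrak k/\mathfrak a\cong\s$ is semi-simple is the right one and fills that gap. Second, the paper writes ``(faithful)'' in parentheses but never discusses faithfulness; your observation that $\ker\Phi=\ker\phi$ and that indecomposability of $\g$ forces $\ker\phi=0$ is correct and is the intended reading---the algebras in Turkowski's list are all indecomposable, so this hypothesis is implicit throughout the paper.
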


\begin{proof} We quote the structure equations in eqn
$\ref{rep}$ where now we assume that $C^k_{ij}=0$. To obtain
the required representation consider ad$(\sigma)$ restricted to
$\rho$, in other words the matrices $C^k_{ai}$. Define $E_a$ to be
the same matrix as $C^k_{ai}$ but now augmented by an extra bottom
row and last column of zeroes. Now define $E_k$ to be the $(r+1)
\times (r+1)$ matrix whose only non-zero is $1$ in the $(k,r+1)$th
position. Then $[E_a,E_i]=E_aE_i-E_iE_a$; now $E_iE_a=0$. As regards
$E_aE_i$ it is a matrix that has only non-zero entries in the last
column. That last column is $C^k_{ai}$ with $a$ and $k$ fixed,
augmented by a zero in the $(r+1,r+1)$th position which means that
$[E_a,E_i]=C^k_{ai}E_k$. Since $[E_a,E_b]=C^c_{ab}E_c$ and
$[E_i,E_j]=0$ we have a representation of $\g$ in $gl(r+1, \Bbb R)$. 
\end{proof}

\noindent \begin{theorem} \label{ab_rep_thm1} Suppose that $\rho$
is the radical of a Lie algebra $\g$ and that $\rho$ has trivial center. Then $\g$ has a
 faithful  representation of $\g$ in $\g\l(r,\Bbb R)$ where $\rho$ is of dimension $r$.
\end{theorem}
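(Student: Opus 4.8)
The plan is to represent $\g$ on its own radical by the restricted adjoint action. Because $\rho$ is an ideal, $\operatorname{ad}_\g(x)\,\rho\subseteq\rho$ for every $x\in\g$, so
\[
\pi\colon\g\to\g\l(\rho)\cong\g\l(r,\R),\qquad \pi(x)=\operatorname{ad}_\g(x)|_\rho,
\]
is a well-defined Lie algebra homomorphism into matrices of exactly the required size $r$. Thus the entire content of the statement is the faithfulness of $\pi$, i.e. the vanishing of $\ker\pi$.

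The kernel is the centraliser $\c=\{x\in\g:[x,\rho]=0\}$. First I would check that $\c$ is an ideal: for $x\in\c$, $y\in\g$ and $z\in\rho$, the Jacobi identity gives $[[y,x],z]=[y,[x,z]]-[x,[y,z]]=0$, since $[x,z]=0$ and $[y,z]\in\rho$. Next, $\c\cap\rho=\{x\in\rho:[x,\rho]=0\}=Z(\rho)=0$ by hypothesis; this already records the classical fact that $\operatorname{ad}_\rho$ is faithful when $\rho$ is centre-free, so no nonzero element of the radical can lie in $\ker\pi$.

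To control the part of $\c$ lying outside $\rho$ I would show that $\c$ is semisimple and that such an ideal splits off. The radical $R(\c)$ is characteristic in $\c$, hence invariant under $\operatorname{ad}_\g$ (as $\c$ is an ideal), hence a solvable ideal of $\g$, so $R(\c)\subseteq\rho$; together with $R(\c)\subseteq\c$ and $\c\cap\rho=0$ this forces $R(\c)=0$, so $\c$ is semisimple. Viewing $\g$ as a module over the semisimple algebra $\c$ under the adjoint action and applying Weyl complete reducibility \cite{Hum}, the submodule $\c$ has an invariant complement $\m$; since $[\c,\m]\subseteq\c\cap\m=0$ we obtain $\g=\c\oplus C_\g(\c)$ as a direct sum of commuting ideals.

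The main obstacle is the final implication $\c=0$. Everything above is routine bookkeeping, but concluding faithfulness requires ruling out a nonzero semisimple ideal that centralises $\rho$. The previous paragraph shows that such a $\c$ would exhibit $\g$ as a nontrivial direct sum $\c\oplus C_\g(\c)$ in which $\c$ acts trivially on the radical; so the precise obstruction to faithfulness is the presence of a semisimple summand acting trivially on $\rho$, and its absence — which is exactly the point to be verified for the algebra at hand — yields $\c=0$ and hence the desired embedding $\g\hookrightarrow\g\l(r,\R)$.
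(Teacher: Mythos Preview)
Your construction is exactly the paper's: map $x\in\g$ to $\operatorname{ad}(x)|_\rho$, and observe that the three required bracket relations are precisely the last three Jacobi conditions in eqn.~\eqref{struc_eq}. The paper's proof is one paragraph and simply asserts that trivial centre of $\rho$ gives faithfulness.

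You have been more careful than the paper and have put your finger on a real gap. The kernel is $\c=C_\g(\rho)$, and $Z(\rho)=0$ only gives $\c\cap\rho=0$; it does not by itself kill $\c$. Indeed the theorem as literally stated is false: take $\g=\s\l(2,\R)\oplus A_{2.1}$ (direct sum), with radical $\rho=A_{2.1}$ having trivial centre, yet $\s\l(2,\R)\subseteq\ker\pi$. Your further analysis---$\c$ is a semisimple ideal, hence $\g=\c\oplus C_\g(\c)$ as a direct sum of ideals with $\rho\subseteq C_\g(\c)$---is correct and identifies the missing hypothesis: one needs $\g$ indecomposable (or, equivalently, that no simple factor of the Levi subalgebra centralises $\rho$). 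Under that hypothesis your decomposition forces $\c=0$ and the proof is complete. Since every application in the paper is to Turkowski's \emph{indecomposable} algebras, this extra hypothesis is satisfied throughout; you have not left a gap so much as found one in the paper and supplied the argument that closes it.
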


\begin{proof} Define $E_a$ to be the matrix $C^k_{ai}$ and $E_j$ to
be the matrix $C^k_{ij}$. Then in order to have a representation of
$\g$ we require that \begin{equation} \label{solveq1}
[E_a,E_b]=C^c_{ab}E_c, \, [E_a,E_i]=C^k_{ai}E_k, \,
[E_i,E_j]=C^k_{ij}E_k. \end{equation} However, these conditions are
identical with the last three in eqn. \ref{struc_eq}. If the center
of $\rho$ is trivial then we will have a faithful representation of
$\g$.
\end{proof}

The previous Theorem admits the following variation, having been already noted in the solvable case \cite{GT2}.

\noindent \begin{theorem} \label{ab_rep_thm2} Suppose that $\rho$
is the radical of a Lie algebra $\g$ and that $\rho$ has an abelian nilradical of dimension $r-1$ where $\rho$ is of dimension $r$. Then $\g$ has a
faithful  representation in $\g\l(r,\Bbb R)$.
\end{theorem}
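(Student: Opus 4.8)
The plan is to mimic the construction in Theorem \ref{ab_rep_thm1} but to exploit the abelian nilradical in order to recover faithfulness even though the center of $\rho$ need not be trivial. Write $\rho = \n \oplus \l$ (as vector spaces), where $\n$ is the abelian nilradical of dimension $r-1$ and $\l$ is a one-dimensional complement spanned by some $e_r$. Since $\n$ is an ideal of $\rho$, the action of $e_r$ on $\n$ by commutator gives a derivation, and the action of $\sigma$ on $\rho$ preserves $\n$ (the nilradical is characteristic). So as a first step I would set up the structure equations in the adapted basis $\{e_a\}\cup\{e_i\}_{i<r}\cup\{e_r\}$, noting that $[e_i,e_j]=0$ for $i,j<r$ and $[e_r,e_i]=C^k_{ri}e_k$ with $k<r$.

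Next, following the pattern of \cite{GT2} in the purely solvable case, I would build an $r\times r$ representation as follows: let $E_a$ ($a$ ranging over $\sigma$) be the $(r-1)\times(r-1)$ matrix $C^k_{ai}$ ($i,k<r$) of the $\sigma$-action on $\n$, augmented by a zero last row and last column; let $E_r$ be the $(r-1)\times(r-1)$ matrix of $\mathrm{ad}(e_r)|_\n$ augmented by a last column whose first $r-1$ entries are arbitrary parameters (to be pinned down) and a zero in the $(r,r)$ slot; and let $E_i$ for $i<r$ be the matrix with a single $1$ in the $(i,r)$ position. One checks $[E_i,E_j]=0$, $[E_a,E_i]=C^k_{ai}E_k$ directly as in Theorem \ref{ab_rep_thm}, and $[E_r,E_i]$ produces the $i$th column of $\mathrm{ad}(e_r)|_\n$, which is exactly $C^k_{ri}E_k$. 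The remaining bracket $[E_a,E_r]$ must equal $C^b_{ar}E_b$; since $C^b_{ar}=0$ (as $\sigma$ acts on $\rho$ and $[\sigma,\rho]\subset\rho$, with no $\sigma$-component) this forces a linear condition on the arbitrary last-column entries of $E_r$, namely that the column vector transforms under $\sigma$ the same way the abstract element $e_r$ does modulo $\n$, i.e. trivially. That system is always solvable — at worst take the last column of $E_r$ to be zero — and in fact one wants to choose it cleverly so as to kill the center.

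The crux is faithfulness. An element $X=\sum x^a e_a+\sum_{i<r} x^i e_i + x^r e_r$ maps to the zero matrix only if (looking at the upper $(r-1)\times(r-1)$ block) $\sum x^a E_a^{\mathrm{upper}} + x^r \,\mathrm{ad}(e_r)|_\n = 0$ and (looking at the last column) $\sum_{i<r} x^i e_i^{\mathrm{col}} + x^r(\text{last column of }E_r)=0$. The semisimple part is handled by Proposition \ref{Hump}-type reasoning or just by the fact that a nonzero element of $\sigma$ cannot act as a scalar on $\n$ in a way cancelled against $\mathrm{ad}(e_r)$ unless that combination is itself trivial; and then the last-column equation forces the $x^i=0$ once $x^r$ is shown to vanish. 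The one genuine obstacle is the possibility that $e_r$ itself (or some $e_r$ plus a central element of $\n$) acts trivially on $\n$ and is not detected — precisely the center-of-$\rho$ issue — and this is exactly why the freedom in the last column of $E_r$ matters: one selects those parameters to be a vector not annihilated by $\mathrm{ad}(e_r)$, or more carefully, to separate whatever part of the center of $\rho$ lies in $\n$. I expect the bulk of the write-up to be this faithfulness bookkeeping; the representation-is-a-homomorphism part is routine once the basis is adapted to $\n$.
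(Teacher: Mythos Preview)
Your construction is the paper's: represent $\sigma$ and $e_r$ by their adjoint action on $\rho$, and represent the abelian nilradical by the elementary matrices $E_k$ with a single $1$ in position $(k,r)$. The paper does this in three lines --- it sets $E_a=(C^k_{ai})$ and $E_r=(C^k_{ri})$ as full $r\times r$ matrices and says ``proceed as in Theorem~\ref{ab_rep_thm1}'', introducing no free parameters in the last column of $E_r$.

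There is, however, a genuine gap in your treatment of $[E_a,E_r]$. You note that $C^b_{ar}=0$ (no $\sigma$-component in $[e_a,e_r]$) and conclude that taking $v=0$ suffices. But this ignores the $\n$-components $C^k_{ar}$ for $k<r$: the relation you must enforce is $[E_a,E_r]=\sum_{k<r}C^k_{ar}E_k$, and with your $E_a$ (zero last column) and $E_r$ (last column $v$) this reads $A_a v=(C^k_{ar})_{k<r}$ for every $a$. Setting $v=0$ fails whenever $[e_a,e_r]\neq 0$. The fix is to first replace $e_r$ by the $\sigma$-invariant element of the coset $e_r+\n$, which exists by complete reducibility of the $\sigma$-action on $\rho$; after that normalisation $C^k_{ar}=0$ and $v=0$ works (equivalently, take $v$ to be the shift $e_r-e_r'$). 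Your faithfulness sketch goes beyond what the paper writes; the clean observation you want there is that $\mathrm{ad}(e_r)|_\n$ commutes with every $\mathrm{ad}(e_a)|_\n$ and is nonzero (else $e_r$ would lie in the nilradical), hence cannot lie in the image of the semisimple $\sigma$ in $\g\l(\n)$ --- so the upper-left block already separates $\sigma\oplus\langle e_r\rangle$, and the last column separates $\n$.
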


\begin{proof} Define $E_a$ to be the matrix $C^k_{ai}$ and $E_r$ to
be the matrix $C^k_{ri}$. For $1\leq k \leq r-1$ define $E_k$ to be the $r
\times r$ matrix whose only non-zero entry is a $1$ in the $(k,r)$th
position. Then proceed as in Theorem \ref{ab_rep_thm1}.
\end{proof}


\section{Minimal Representations}

\subsection{Algebras of dimension $5,6,7$}

\subsubsection{Algebras of dimension $5$}

The first algebra that has a non-trivial Levi decomposition is denoted either by $A_{5.40}$ in \cite{PSWZ} or $L_{5.1}$ in \cite{Turk}. We obtain a representation for it in  $\g\l(3,\R)$ by using Theorem \ref{ab_rep_thm}.

\subsubsection{Algebras of dimension $6$}

In dimension six there are four algebras that have a non-trivial Levi decomposition denoted by $L_{6.1-6.4}$ in \cite{Turk}. According to \cite{TW} only one of them, $L_{6.3}$, has a  representation in $\g\l(3,\R)$ which is supplied by Theorem \ref{ab_rep_thm1}. Furthermore, Theorem \ref{ab_rep_thm} provides
representations of minimal dimension for $L_{6.1}$ and $L_{6.4}$ in $\g\l(4,\R)$. Furthermore $L_{6.2}$ has a representation in $\g\l(4,\R)$ which is also of minimal dimension.

\subsubsection{Algebras of dimension $7$}

According to \cite{TW} the only seven-dimensional algebra that has a representation in $\g\l(3,\R)$ is decomposable. There are seven algebras that have a non-trivial Levi decomposition denoted by $L_{7.1-7.7}$ in \cite{Turk}. Theorem \ref{ab_rep_thm1} provides
representations for $L_{7.1},L_{7.3},L_{7.4},L_{7.5}$ in $\g\l(4,\R)$ whereas Theorem \ref{ab_rep_thm} yields representations for $L_{7.2},L_{7.6}$ and $L_{7.7}$ in $\g\l(5,\R)$. In fact $L_{7.7}$ also has a representation in $\g\l(4,\R)$.

It remains to show that the representations for $L_{7.2},L_{7.6}$ coming from Theorem (\ref{ab_rep_thm}) are minimal. We have said already that they cannot be subalgebras of  $\g\l(3,\R)$. If they had representations in $\g\l(4,\R)$ there would be a basis in which the radical $\R^4$ would correspond to the upper right $2\times 2$ block according to a Theorem of Schur-Jacobson \cite{Jac}. As such the form of the representation would have to be $\left[\begin{smallmatrix}
  A & C  \\
  0 & B \\
\end{smallmatrix}\right]$. The occurrence of the lower left zero $2\times 2$ block is a consequence of the fact that the radical is an ideal. In the case of $L_{7.2}$ there is then no room to accommodate a representation of $\s\o(3)$; for $L_{7.6}$ we could only take either the standard  $2\times 2$ representation of $\s\l(2,\R)$ or the $4\times 4$ ``diagonal"  representation of $\s\l(2,\R)$. In the former case we obtain algebra $L_{7.7}$ and in the latter a decomposable subalgebra but certainly not $L_{7.6}$.




\section{Algebras of dimension $8$}

Now we consider Turkowski's algebras of dimension $8$. At the outset it is clear that for such algebras $\mu \geq 4$ because the only eight-dimensional subalgebra of $\g\l(3,\R)$ is $\s\l(3,\R)$, which is simple \cite{TW}. We shall find the algebras for which $\mu=4$.
\subsection{Algebras of dimension $8$ for which $\mu=4$}
According to  Proposition \ref{Hump} we cannot have the $4\times 4$ irreducible representation of $\s\l(2,\R)$ or $\s\o(3)$. Suppose next that we had the $3\times 3$ irreducible representation of $\s\l(2,\R)$ or $\s\o(3)$. A complement has to have matrices of the form
$\left[\begin{smallmatrix}
s & 0& 0&w \\
0 &s&0& x\\
0 & 0& s&y \\
p &q&r& z\\
\end{smallmatrix}\right]$. However, the  complement will be solvable only if either $w=x=y=0$ or $p=q=r=0$ in which case it gives an eight-dimensional \emph{decomposable} algebra. So the only possibilities, up to isomorphism, for the simple factor is either of the representations
$\left[\begin{smallmatrix}
a & b& 0&0 \\
c &-a&0& 0\\
0 & 0& 0&0 \\
0 &0&0& 0\\
\end{smallmatrix}\right]$ or $\left[\begin{smallmatrix}
a & b& 0&0 \\
c &-a&0& 0\\
0 & 0& a&b \\
0 &0&c& -a\\
\end{smallmatrix}\right]$.

In the first case the invariant subspaces may be described as
$\left[\begin{smallmatrix} 0_{2\times 2}&x&y \\
u^t &0&0\\
v^t &0&0\\
\end{smallmatrix}\right]$ where $u,v,x,y\in\R^2$ and each of them engenders a two-dimensional invariant subspace, together with a kernel
$\left[\begin{smallmatrix} \lambda I_2&0&0\\
0&d&e\\
0&f&g\\
\end{smallmatrix}\right]$. The radical cannot consist entirely of the kernel or else we will obtain a decomposable algebra, so one of $u,v,x,y$ must occur. Suppose that $y$ occurs. We will suppose also that $x$ occurs. We will need to add in a one-dimensional subspace of the kernel. It can be one of four kinds:

\small \begin{equation}\label{list}
\left[\begin{matrix}
\lambda h&0&0&0\\
0&\lambda h&0&0\\
0&0&\mu h&0\\
0&0&0&\nu h\\
\end{matrix}\right]
\quad
\left[\begin{matrix}
\lambda h&0&0&0\\
0&\lambda h&0&0\\
0&0&h&h\\
0&0&0&h\\
\end{matrix}\right]
\quad
\left[\begin{matrix}
\lambda h&0&0&0\\
0&\lambda h&0&0\\
0&0&\mu h&h\\
0&0&-h&\mu h
\end{matrix}\right]
\quad
\left[\begin{matrix}
0&0&0&0\\
0&0&0&0\\
0&0&0&h\\
0&0&0&0
\end{matrix}\right].
\end{equation}
It is not our concern here to look at all possible subalgebras of $\s\l(4,\R)$ (see \cite{GT3}); however, if we put $\nu=\lambda+p, \mu=1+\lambda$ in the first matrix in  \ref{list}, we obtain precisely $L_{8.17}$. In the second case we obtain $L_{8.16}$, in the third $L_{8.18}$ and in the fourth $L_{8.14}$.

Now suppose that $x=0$  but that $y\neq0$. Then another way to obtain an algebra is to take $u\neq0$ and independent of $y$. We will need to have $e\neq0$ in the kernel. We obtain thereby a radical that is isomorphic to the five-dimensional Heisenberg algebra: it is $L_{8.13 \epsilon=-1}$ in \cite{Turk}. Other choices of invariant subspaces will lead to different representations but not to different underlying algebras.

Finally we come back to the other representation $\left[\begin{smallmatrix}
a & b& 0&0 \\
c &-a&0& 0\\
0 & 0& a&b \\
0 &0&c& -a\\
\end{smallmatrix}\right]$ of $\s\l(2,\R)$. This time the invariant subspaces may be described as
$\left[\begin{smallmatrix}
A&B \\
C&D\\
\end{smallmatrix}\right]$, where each of $A,B,C,D$ is a copy of $\s\l(2,\R)$ together with the kernel
$\left[\begin{smallmatrix}
dI_2&eI_2\\
fI_2&gI_2\\
\end{smallmatrix}\right]$.
To avoid having a decomposable algebra we include $B$ and we need to take a two-dimensional kernel. Again to avoid decomposability, we shall have to take, up to equivalence, $e\neq0$ and $g=(\lambda+1)d$. We obtain $L_{8.12 p=1}$.

We know now that for all the remaining algebras $\mu>4$. Next we observe that Theorem \ref{ab_rep_thm1} is applicable to the following algebras:
$L_{8.1}, L_{8.3}, L_{8.4}, L_{8.7}, L_{8.8 p\neq0}, L_{8.9}, L_{8.10}, L_{8.11}, L_{8.12 p\neq0}, L_{8.20}$ and for them we have $\mu=5$. There remain nine cases which are
not yet covered: $L_{8.2}, L_{8.5}, L_{8.6},L_{8.8 p=0}, L_{8.13 \epsilon=1}, L_{8.15}, L_{8.19}, L_{8.21}, L_{8.22}$. We note that Theorem
\ref{ab_rep_thm} implies for $L_{8.5},L_{8.21},L_{8.22}$ that $\mu \leq 6$. In fact for $L_{8.22}$ we have $\mu = 5$. The cases of $L_{8.5}$ and $L_{8.21}$ will be discussed below.
As regards $L_{8.8}$ it depends on a parameter $p$; for $p\neq0$ the radical has trivial center but for $p=0$ there is a one-dimensional center. However, in the case $p=0$
we can find a representation by appealing to Theorem \ref{ab_rep_thm2}. This representation, unlike the adjoint representation of $\rho$, extends uniformly for all values of $p$.
Hence for $L_{8.8}$ we have $\mu=5$. The case $L_{8.15}$ will be discussed below. We shall also consider $L_{8.2}, L_{8.6}, L_{8.13 \epsilon= 1}, L_{8.19}$ where the radical is the five-dimensional Heisenberg algebra.
\subsection{$L_{8.5}$ and $L_{8.21}$} As stated above, Theorem \ref{ab_rep_thm} furnishes representations for $L_{8.5}$ and $L_{8.21}$ in dimension $6$. The question is whether
representations exist in lower dimensions. We know that the only possibility is a representation in dimension $5$. We examine $L_{8.5}$ first. We know that $\s\o(3)$ has a unique irreducible representation in every dimension starting at $3$. To find a representation in $\g\l(5,\R)$ we must consider the irreducible representations in dimensions $n=3,n=4$ and $n=5$. The case $n=5$ is impossible according to Proposition \ref{Hump}. For the case $n=3$ we argue as follows. Without loss of generality the representation for $\s\o(3)$ may be assumed to be of the form $\left[\begin{smallmatrix}
A&0_{3\times 2}\\
0_{2\times 3}&0_{2\times 2}
\end{smallmatrix}\right]$ where $A$ is the standard irreducible $3\times 3$ representation of $\s\o(3)$. Now consider the action of this representation of $\s\o(3)$ on
$\g\l(5,\R)$. We write such a matrix in $\g\l(5,\R)$ as
$\left[\begin{smallmatrix}
A+\alpha I_3&x&y\\
u^t&a&b\\
v^t&c&d\\
\end{smallmatrix}\right]$ where $u,v,x,y \in \R^3$ and $a,b,c,d,\alpha \in \R$. Now $u,v,x,y$ engender invariant subspaces and so too do the one-dimensional subspaces
corresponding to $a,b,c,d,\alpha$; the latter five span the kernel of the representation. In order to have a $5$-dimensional radical we must choose one of $u,v,x,y$ and a
$2$-dimensional subspace of the kernel. However, we will never obtain as the representation $R_5$ of $\s\o(3)$ on the radical, the irreducible $5\times 5$ representation of $\s\o(3)$. The argument for $n=4$ is similar if even easier.

The preceding argument lends itself to immediate generalization as follows. Whenever a $k\times k$ irreducible representation of a semi-simple algebra is augmented by adding a certain number of zero rows and columns and acts by commutator on $\g\l(N,\R)$ for some $N>k$, the corresponding invariant subspaces give a number of copies of the associated standard or definition representation, that is, a matrix acting by matrix multiplication on a vector, together with the kernel and a complement to the $k\times k$ irreducible representation inside of $\g\l(k,\R)$. In particular, it is impossible to obtain an $N\times N$ irreducible representation unless $k=N$ or from the complementary subspace; of course for $k=N$ this subspace is not available as it would change the given representation into $\g\l(N,\R)$.

Now we examine $L_{8.21}$ and apply the discussion of the last paragraph. The representation of $\s\l(2,\R)$ on the radical is supposed to give $D_2$. As such we can exclude the irreducible $4\times 4$ representation of sizes $2\times 2, 3\times 3$ and $4\times 4$. The $5\times 5$ representation is also not allowed because of Proposition \ref{Hump}. There remain two more representations to consider:

\[\small\begin{array}{llll}
\left[\begin{matrix}
2a&2b&0&0&0\\
c&0&b&0&0\\
0&2c&-2a&0&0\\
0&0&0&a&b \\
0&0&0&c&-a
\end{matrix}\right] \,\,
\left[\begin{matrix}
a&b&0&0&0\\
c&-a&0&0&0\\
0&0&a&b&0\\
0&0&c&-a&0 \\
0&0&0&0&0
\end{matrix}\right]
\end{array}.\]
In the first case the invariant subspaces can be described as:
\[\small \begin{array}{lllll}
\left[\begin{matrix}
a&b&0&0&0\\
c&-a&0&0&0\\
0&0&0&0&0\\
0&0&0&0&0\\
0&0&0&0&0
\end{matrix}\right]
\quad
\left[\begin{matrix}
0&0&0&0&0\\
0&0&0&0&0\\
0&0&2d&2e&0\\
0&0&f&0&e\\
0&0&0&2f&-2d
\end{matrix}\right]
\quad
\left[\begin{matrix}
0&0&0&0&0\\
0&0&0&0&0\\
0&0&g&-2h&i\\
0&0&j&-2g&h\\
0&0&k&-2j&g
\end{matrix}\right]
\quad
\quad
\left[\begin{matrix}
z&0&0&0&0\\
0&z&0&0&0\\
0&0&\alpha&0&0\\
0&0&0&\alpha&0\\
0&0&0&0&\alpha
\end{matrix}\right]
\end{array}\]
\[\small\begin{array}{lllll}
\left[\begin{matrix}
0&0&m&n&0\\
0&0&0&m&n\\
0&0&0&0&0\\
0&0&0&0&0\\
0&0&0&0&0
\end{matrix}\right]
\left[\begin{matrix}
0&0&p&-2q&r\\
0&0&s&-2p&q\\
0&0&0&0&0\\
0&0&0&0&0\\
0&0&0&0&0
\end{matrix}\right]
\quad
\left[\begin{matrix}
0&0&0&0&0\\
0&0&0&0&0\\
t&0&0&0&0\\
u&t&0&0&0\\
0&u&0&0&0
\end{matrix}\right]
\quad
\left[\begin{matrix}
0&0&0&0&0\\
0&0&0&0&0\\
v&x&0&0&0\\
-2w&-2v&0&0&0\\
y&w&0&0&0
\end{matrix}\right].
\end{array}\]
We see that none of these invariant subspaces are five-dimensional and hence we cannot obtain the representation $D_2$.

In the second case the invariant subspaces are given by:
\[\small  \begin{array}{llll}
\small\left[\begin{matrix}
A&0&0\\
0&0&0\\
0&0&0\\
\end{matrix}\right]

\left[\begin{matrix}
0&B&0\\
0&0&0\\
0&0&0\\
\end{matrix}\right]

\left[\begin{matrix}
0&0&0\\
C&0&0\\
0&0&0\\
\end{matrix}\right]

\left[\begin{matrix}
0&0&0\\
0&D&0\\
0&0&0\\
\end{matrix}\right]

\left[\begin{matrix}
aI_2&bI_2&0\\
cI_2&dI_2&0\\
0&0&\alpha\\
\end{matrix}\right]\\

\left[\begin{matrix}
0&0&u\\
0&0&0\\
0&0&0\\
\end{matrix}\right]

\left[\begin{matrix}
0&0&0\\
0&0&v\\
0&0&0\\
\end{matrix}\right]

\left[\begin{matrix}
0&0&0\\
0&0&0\\
x^t&0&0\\
\end{matrix}\right]

\left[\begin{matrix}
0&0&0\\
0&0&0\\
0&y^t&0\\
\end{matrix}\right]
\end{array}  \]
where $A,B,C,D$ are each copies of $\s\l(2,\R), u,v,x,y\in\R^3$ and $a,b,c,d,\alpha\in\R$. Again there is no possibility of obtaining the representation $D_2$.


\subsection{Radical $5$-dimensional Heisenberg}

\begin{proposition}
Of the algebras $L_{8.6},L_{8.13 \epsilon=\pm 1},L_{8.19}$ only $L_{8.13 \epsilon=-1}$ has a faithful representation in $\g\l(4,\R)$.
\end{proposition}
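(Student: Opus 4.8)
The plan is to analyze each of the four algebras $L_{8.6}$, $L_{8.13\,\epsilon=1}$, $L_{8.13\,\epsilon=-1}$, $L_{8.19}$ separately, exploiting the fact that all have the five-dimensional Heisenberg algebra $\mathfrak{h}_5$ as radical and a one-dimensional or three-dimensional semi-simple factor (here $\mathfrak{sl}(2,\R)$ or $\mathfrak{so}(3)$). First I would observe that, since the radical is five-dimensional with one-dimensional center, Theorem \ref{ab_rep_thm1} does not apply directly (the center of $\rho$ is not trivial), so a representation in $\g\l(4,\R)$, if it exists, cannot come from the adjoint-type construction and must be found by hand. In $\g\l(4,\R)$ the radical $\rho$ must, by the Schur--Jacobson theorem \cite{Jac}, be conjugated so that the semi-simple factor acts reducibly with the radical sitting in a strictly block-upper-triangular position; the only room for a faithful $\mathfrak{sl}(2,\R)$ or $\mathfrak{so}(3)$ is the standard $2\times 2$ block (the $3\times 3$ and $4\times 4$ irreducibles being excluded as in the $L_{7.2},L_{7.6}$ and dimension-8 discussions, and Proposition \ref{Hump} killing the $4\times 4$ case). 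So the representation has the shape $\left[\begin{smallmatrix} A & C \\ 0 & B\end{smallmatrix}\right]$ with $A$ a $2\times 2$ block carrying the standard representation of the simple factor and $B$ a complementary $2\times 2$ block; one then asks whether the invariant subspaces available — copies of $\R^2$ in the off-diagonal block $C$ together with pieces of the kernel inside the two diagonal $2\times 2$ blocks — can be assembled into a five-dimensional nilpotent subspace closing up to $\mathfrak{h}_5$ and carrying the prescribed action of the simple factor.

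The key computational step is to enumerate, for the standard $2\times 2$ representation of $\mathfrak{sl}(2,\R)$ (or $\mathfrak{so}(3)$) acting by commutator on $\g\l(4,\R)$, the invariant subspaces, exactly as was done for $L_{8.21}$: one gets the two-dimensional module $C\in\R^2$ (actually a copy of the standard module) sitting in the upper-right block, the copy of $\mathfrak{sl}(2,\R)$ inside each diagonal block, and the scalars spanning the kernel. For each of the four algebras I would then compare the required $\mathfrak{sl}(2,\R)$- (resp.\ $\mathfrak{so}(3)$-) module structure on $\mathfrak{h}_5$ — which for $L_{8.13}$ is (reading off Turkowski's brackets) a sum of a two-dimensional standard module, a possible second standard or trivial pieces, and the one-dimensional center — against what the invariant-subspace inventory in $\g\l(4,\R)$ can supply, and check whether the Heisenberg bracket relation $[\,\cdot\,,\cdot\,]$ mapping onto the central generator can be realized by matrix commutators of the chosen blocks. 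For $L_{8.13\,\epsilon=-1}$ this succeeds: indeed we already exhibited its radical (the five-dimensional Heisenberg) arising in the $\g\l(4,\R)$ discussion of the dimension-8 case with $\mu=4$ (the ``$x=0$, $y\neq0$, $u\neq0$ independent of $y$, $e\neq0$'' construction), so that representation is the witness. For $L_{8.6}$, $L_{8.13\,\epsilon=1}$ and $L_{8.19}$ I would show the assembly fails: either the action of the simple factor forced by Turkowski's brackets cannot be fitted into a single off-diagonal $\R^2$ plus kernel without the Heisenberg commutator collapsing to zero, or the derivation condition (the fourth of eqn.\ \ref{struc_eq}, equivalently that the chosen $2\times2$ blocks act as derivations of $\mathfrak{h}_5$) is violated, or else one is forced into a decomposable or non-faithful representation.

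The main obstacle I anticipate is the case-by-case verification that no ingenious choice of invariant subspaces works for the three negative cases: unlike the clean dimension-counting arguments used for $L_{8.5}$ and $L_{8.21}$ (where every invariant subspace was of dimension $<5$ so the desired irreducible simply could not appear), here the module structure on $\mathfrak{h}_5$ is reducible and five-dimensional pieces can in principle be cobbled together, so the obstruction is subtler — it lies in the interaction between the Heisenberg bracket and the action of the simple factor, i.e.\ in the compatibility of the cocycle defining $\mathfrak{h}_5$ with the $\mathfrak{sl}(2,\R)$-equivariance. Concretely, in $\g\l(4,\R)$ with the block form above, the product of two strictly-upper-triangular radical elements lands in the $C$-block (for products of the two diagonal nilpotent directions) or is zero; I would show that the resulting bilinear form cannot match the nondegenerate-on-a-$4$-dimensional-complement alternating form of $\mathfrak{h}_5$ when the simple-factor action is that required by $L_{8.6}$, $L_{8.13\,\epsilon=1}$, $L_{8.19}$, whereas for $L_{8.13\,\epsilon=-1}$ the relative sign $\epsilon=-1$ is exactly what makes the forms compatible. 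Modulo that case analysis, combined with the already-established fact that none of these algebras embeds in $\g\l(3,\R)$, the proposition follows; for the three negative cases we conclude $\mu\geq 5$ (and hence, by Theorem \ref{ab_rep_thm} or a direct construction, $\mu=5$).
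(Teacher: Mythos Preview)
Your outline is a plan rather than a proof, and it diverges from the paper's argument in a way that creates real gaps. The paper does not start from a block decomposition of the simple factor; it starts from the central element $e_8$. Since $e_8$ spans the center of the Heisenberg radical, in any faithful representation it must be nilpotent, hence conjugate to one of three Jordan forms in $\g\l(4,\R)$ (rank $3$, $2$, or $1$). The paper then computes the centralizer of each: rank $3$ gives a $4$-dimensional abelian centralizer (too small to contain the whole algebra), rank $2$ gives the decomposable algebra $\R^2\oplus L_{6.4}$, and rank $1$ gives a $10$-dimensional centralizer in which the only way to embed the five-dimensional Heisenberg with its derived algebra spanned by the type-(iii) matrix is the obvious one, yielding exactly $L_{8.13\,\epsilon=-1}$. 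This is short and uniform across the four algebras.

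Your approach has two concrete problems. First, your invocation of Schur--Jacobson to force the block form $\left[\begin{smallmatrix}A&C\\0&B\end{smallmatrix}\right]$ is not valid here: that theorem bounds \emph{abelian} subalgebras, and the five-dimensional Heisenberg radical is not abelian, so you cannot place $\rho$ in the upper-right $2\times 2$ block by that argument. Second, you acknowledge that the ``main obstacle'' --- the actual case analysis showing no assembly of invariant subspaces works for $L_{8.6}$, $L_{8.13\,\epsilon=1}$, $L_{8.19}$ --- is not carried out; without it you have a heuristic, not a proof. The paper's centralizer trick sidesteps both issues by organizing the case split around the Jordan type of $E_8$ rather than around the embedding of the simple factor.

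Finally, your closing parenthetical is incorrect: Theorem \ref{ab_rep_thm} requires an abelian radical, which $\mathfrak{h}_5$ is not, and in fact the paper goes on to show $\mu=6$ (not $5$) for $L_{8.6}$, $L_{8.13\,\epsilon=1}$, and $L_{8.19}$.
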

\begin{proof}
Each of the three algebras has a one-dimensional center spanned by $e_8$. In each case $e_8$ also spans the center of the Heisenberg algebra of dimension five which is the radical in these cases. In any matrix representation of Heisenberg, a non-zero center must be represented by a nilpotent matrix. Up to isomorphism, in $\g\l(4,\R)$ there are precisely three non-zero nilpotent matrices of rank three, two and one:

\[ \small (i)\begin{array}{llll}
\left[\begin{matrix}
0&1&0&0\\
0&0&1&0\\
0&0&0&1\\
0&0&0&0
\end{matrix}\right]\,
(ii)\,
\left[\begin{matrix}
0&0&1&0\\
0&0&0&1\\
0&0&0&0\\
0&0&0&0
\end{matrix}\right]
\, (iii)\,
\left[\begin{matrix}
0&0&0&1\\
0&0&0&0\\
0&0&0&0\\
0&0&0&0
\end{matrix}\right].
\end{array} \]
The centralizer of a matrix of type (i) is of the form $\left[\begin{smallmatrix}
a&b&c&d\\
0&a&b&c\\
0&0&a&b\\
0&0&0&a
\end{smallmatrix}\right]$, which is four-dimensional abelian. As regards a matrix of type (ii), its centralizer is of the form $\left[\begin{smallmatrix}
a&b&e&f\\
c&d&g&h\\
0&0&a&b\\
0&0&c&d
\end{smallmatrix}\right]$. The latter space of matrices comprise an eight-dimensional \emph{decomposable} Levi algebra, $\R^2\oplus L_{6.4}$ in fact. The conclusion is that if any of the algebras $L_{8.6},L_{8.13},L_{8.19}$ has a faithful representation in $\g\l(4,\R)$, a non-zero element in its center must be a multiple of a matrix of type (iii) in the appropriate basis.

The centralizer of a matrix of type (iii) is of the form
$\left[\begin{smallmatrix}
a&b&c&d\\
0&e&f&g\\
0&h&i&j\\
0&0&0&a
\end{smallmatrix}\right]$. This latter space of matrices comprise a decomposable ten-dimensional Levi algebra. In fact it is isomorphic to
$\R\oplus(\s\l(2,\R) \rtimes A_{6.82 \delta=0 a=b=1})$ \cite{Mub2}. Since the derived algebra of Heisenberg is spanned by the matrix of type (iii), the only way to obtain it as a subalgebra is in the obvious way, that is, by taking $a=0$ and $i=-e$: it gives the algebra $L_{8.13 \epsilon=-1}$.
\end{proof}

Now we shall investigate whether algebras that have a $5$-dimensional Heisenberg radical can be represented in $\g\l(2,\R)$. Again the central element $e_8$ must be represented by a nilpotent matrix $E_8$. The rank of $E_8$ cannot be $4$ otherwise its centralizer is abelian; nor can it be $3$ or else the centralizer is solvable. Suppose then that the rank of $E_8$ is $2$ and we may assume that
$E_8=\left[\begin{smallmatrix}
0&0&1&0&0\\
0&0&0&1&0\\
0&0&0&0&0\\
0&0&0&0&0\\
0&0&0&0&0
\end{smallmatrix}\right]$. Its centralizer may be written as $\left[\begin{smallmatrix}
A&B&x\\
0&A&0\\
0&y^t&\alpha
\end{smallmatrix}\right]$ where $A,B$ are $2\times 2$, $x,y\in \R^2$ and $\alpha\in\R$. We deduce first of all that this case cannot correspond to $L_{8.2}$ since we have to have a  representation of $\s\o(3)$.

Now we may assume that $A$ is the standard $2\times 2$ representation of $\s\l(2,\R)$ together with a multiple of the identity. In fact we have the ``$2D_{\frac{1}{2}}$" representation of the form
$\left[\begin{smallmatrix}
a&b&0&0&0\\
c&-a&0&0&0\\
0&0&a&b&0\\
0&0&c&-a&0\\
0&0&0&0&0
\end{smallmatrix}\right]$ and a putative representation of $5$-dimensional Heisenberg in the form
$\left[\begin{smallmatrix}
\lambda I_2&B&x\\
0&\lambda I_2&0\\
0&y^t&\alpha
\end{smallmatrix}\right]$. Now in the case of algebra $L_{8.6}$ the kernel of the representation is ``$3D_0$", that is, three-dimensional; however, this circumstance could arise only from having $\lambda$ and $\alpha$ both non-zero and independent in which case we would obtain a decomposable algebra. Hence $L_{8.6}$ is excluded. In the cases $L_{8.13}$ and $L_{8.19}$ the kernel is one-dimensional and is spanned by $E_8$. As such we may assume that $\lambda=\alpha=0$. Now consider invariant subspaces. Both $x$ and $y$ give invariant subspaces and so does the trace-free part of $B$. However, the only way to have an invariant subspace of dimension five, since the element $E_8$ must be included, is to take $B$ as  multiples of the identity, which, however does not produce a subalgebra. The conclusion is that if there is a $5\times 5$ representation of an algebra that has the $5$-dimensional Heisenberg as its radical, then the element $E_8$ must have rank one.

Now we assume that $E_8$ has rank one and is of the form
$E_8=\left[\begin{smallmatrix}
0&0&0&0&0\\
0&0&0&0&0\\
0&0&0&0&0\\
0&0&0&0&1\\
0&0&0&0&0
\end{smallmatrix}\right]$.  Its centralizer may be written as
$\left[\begin{smallmatrix}
a&b&c&0&x\\
d&e&f&0&y\\
g&h&i&0&z\\
u&v&w&s&t\\
0&0&0&0&s\\
\end{smallmatrix}\right]$ where $A,B$ are $2\times 2$, $x,y\in \R^2$ and $\alpha\in\R$. In order to obtain $L_{8.2}$ we would have to take the three-dimensional representation of
$\s\o(3)$ in the upper left $3\times 3$ block. However, in that case because $(x,y,z)$ and $(u,v,w)$ span invariant subspaces the only possibility is to obtain the \emph{seven}-dimensional Heisenberg algebra.

Considering the cases $L_{8.6},L_{8.13},L_{8.19}$ we see that the only possibility for the $3\times 3$ block is to have either the three-dimensional irreducible representation of
or two-dimensional irreducible representation augmented by an extra row and column of zeros of $\s\l(2,\R)$. In the former case we will obtain again the seven-dimensional Heisenberg algebra. In the latter case we have to consider the algebras that we are trying to represent: $L_{8.6},L_{8.13},L_{8.19}$. For $L_{8.13}$ and $L_{8.19}$ the kernel of the representation is one-dimensional, which leads to the self-same representation that we found above for $L_{8.13 \epsilon=-1}$ but now augmented by an extra middle row and column of zeroes. On the other hand for $L_{8.6}$ the representation is given by ``$D_{\frac{1}{2}}\oplus 3D_0$" whereas $(x,y)$ and $(u,v)$ will each give $D_{\frac{1}{2}}$ unless they are correlated, that is, $u=y, v=-x$ which gives $2D_{\frac{1}{2}}$. Thus $\mu>5$ in the cases $L_{8.6},L_{8.13 \epsilon=1},L_{8.19}$ and in fact for these three algebras $\mu=6$.

\subsection{$L_{8.15}$} Now let us look at $L_{8.15}$. First of all note that unlike the algebras just discussed, where the radical is five-dimensional Heisenberg, although
the radical is again nilpotent, $A_{5.3}$, the algebra as a whole has trivial center and so the adjoint representation is faithful. Thus $\mu\leq 8$.  Moreover, it was shown
in \cite{GT2} that $\mu=5$ for $A_{5.3}$ alone. Thus $\mu\geq 5$ for $L_{8.15}$. In fact in Section 5 the reader will see a $6\times 6$ representation. Thus all that remains
is to show that $\mu>5$ for $L_{8.15}$.

As in the preceding subsection we can argue that $E_8$ must be nilpotent. Again we only have to consider the case where $E_8$ has rank one or two.

\subsubsection{$E_8$ of rank $2$}

If it has rank two, as above,
its centralizer may be written as
$\left[\begin{smallmatrix}
A&B&x\\
0&A&0\\
0&y^t&\alpha
\end{smallmatrix}\right]$ where $A,B$ are $2\times 2$, $x,y\in \R^2$ and $\alpha\in\R$. Although $e_8$ is not a central element, its centralizer is spanned by $\{e_1,e_2,e_3,e_4,e_5,e_8\}$. Hence the simple factor is of the form
$\left[\begin{smallmatrix}
a&b&0&0&0\\
c&-a&0&0&0\\
0&0&a&b&0\\
0&0&c&-a&0\\
0&0&0&0&0
\end{smallmatrix}\right]$ where $a,b,c$ correspond to $E_1,E_2,E_3$, respectively, and
$E_8=\left[\begin{smallmatrix}
0&0&1&0&0\\
0&0&0&1&0\\
0&0&0&0&0\\
0&0&0&0&0\\
0&0&0&0&0
\end{smallmatrix}\right]$.
\noindent Imposing the conditions $[E_1,E_6]=E_1$ and $[E_2,E_6]=0$ implies that $E_6$ must be of the form
$E_6=\left[\begin{smallmatrix}
0&0&0&0&*\\
0&0&0&0&0\\
0&0&0&0&*\\
0&0&0&0&0\\
0&*&0&*&0
\end{smallmatrix}\right]$ and likewise from $[E_1,E_7]=-E_1$ and $[E_3,E_7]=0$ that
$E_7=\left[\begin{smallmatrix}
0&0&0&0&0\\
0&0&0&0&*\\
0&0&0&0&0\\
0&0&0&0&*\\
*&0&*&0&0
\end{smallmatrix}\right]$.

Next we impose the condition $[E_6,E_7]=E_8$ which implies that

\[\begin{array}{llll}
E_6=\left[\begin{smallmatrix}
0&0&0&0&d\\
0&0&0&0&0\\
0&0&0&0&0\\
0&0&0&0&0\\
0&0&0&e&0
\end{smallmatrix}\right]
E_7=\left[\begin{smallmatrix}
0&0&0&0&0\\
0&0&0&0&f\\
0&0&0&0&0\\
0&0&0&0&0\\
0&0&g&0&0
\end{smallmatrix}\right].
\end{array}\] However, now we find that $E_4=[E_6,E_8]$ and $E_5=[E_7,E_8]$ implies that $E_4$ and $E_5$ are zero. Hence no such representation can exist.

\subsubsection{$E_8$ of rank $1$} As above we assume that $E_8$ has rank one and is of the form
$E_8=\left[\begin{smallmatrix}
0&0&0&0&0\\
0&0&0&0&0\\
0&0&0&0&0\\
0&0&0&0&1\\
0&0&0&0&0
\end{smallmatrix}\right]$.  Its centralizer is
$\left[\begin{smallmatrix}
a&b&c&0&x\\
d&e&f&0&y\\
g&h&i&0&z\\
u&v&w&s&t\\
0&0&0&0&s\\
\end{smallmatrix}\right]$. There are two possibilities: either the upper left $3\times 3$ is the irreducible
representation of $\s\l(2,\R)$ or it is the $2\times 2$ representation augmented by an column of rows and zeros, which we take as the third row and third column. Consider the former case first so that
\[\begin{array}{llll}
E_1=\left[\begin{smallmatrix}
2&0&0&0&\\
0&0&0&0&0\\
0&0&-2&0&0\\
0&0&0&0&0\\
0&0&0&0&0
\end{smallmatrix}\right]
E_2=\left[\begin{smallmatrix}
0&2&0&0&0\\
0&0&1&0&0\\
0&0&0&0&0\\
0&0&0&0&0\\
0&0&0&0&0
\end{smallmatrix}\right]
E_3=\left[\begin{smallmatrix}
0&0&0&0&0\\
1&0&0&0&0\\
0&2&0&0&0\\
0&0&0&0&0\\
0&0&0&0&0
\end{smallmatrix}\right].
\end{array}\]
However, the condition $[E_1,E_6]=E_6$ implies that $E_6=0$. Hence there is no representation in this case.

Now we assume that
\[\begin{array}{llll}
E_1=\left[\begin{smallmatrix}
1&0&0&0&\\
0&-1&0&0&0\\
0&0&0&0&0\\
0&0&0&0&0\\
0&0&0&0&0
\end{smallmatrix}\right]
E_2=\left[\begin{smallmatrix}
0&1&0&0&0\\
0&0&0&0&0\\
0&0&0&0&0\\
0&0&0&0&0\\
0&0&0&0&0
\end{smallmatrix}\right]
E_3=\left[\begin{smallmatrix}
0&0&0&0&0\\
1&0&0&0&0\\
0&0&0&0&0\\
0&0&0&0&0\\
0&0&0&0&0
\end{smallmatrix}\right].
\end{array}\]

\noindent Then $[E_1,E_6]=E_6$ implies that
$E_6=\left[\begin{smallmatrix}
0&0&c&d&e\\
0&0&0&0&0\\
0&m&0&0&0\\
0&s&0&0&0\\
0&x&0&0&0
\end{smallmatrix}\right]$
$E_7=\left[\begin{smallmatrix}
0&0&0&0&0\\
0&0&h&i&j\\
k&0&0&0&0\\
r&0&0&0&0\\
w&0&0&0&0
\end{smallmatrix}\right]$ and we find that $[E_2,E_6]=0$ and  $[E_3,E_7]=0$ are identically satisfied.
Furthermore we find that
$E_4=[E_6,E_8]=\left[\begin{smallmatrix}
0&0&0&0&d\\
0&0&0&0&0\\
0&0&0&0&0\\
0&-x&0&0&0\\
0&0&0&0&0
\end{smallmatrix}\right]$
$E_5=[E_7,E_8]=\left[\begin{smallmatrix}
0&0&0&0&0\\
0&0&0&0&i\\
0&0&0&0&0\\
-w&0&0&0&0\\
0&0&0&0&0
\end{smallmatrix}\right]$. Now if $[E_2,E_5]=E_4$ then $i=d$ and $w=-x$ and $[E_3,E_4]=E_5$ is identically satisfied. For $[E_4,E_7]=0$ we obtain
$cx=dx=dk=ex+dr=0$. Assume first that $d=0$ then $c=0,e=0$. At this point we obtain a Lie algebra whose radical is $A_{5.1}$ and not $A_{5.3}$. If instead we assume $x=0$ then we have $k=r=0$. Again we obtain a Lie algebra whose radical is $A_{5.1}$ and not $A_{5.3}$.

\section{Minimal Representations of Turkowski's algebras}

\subsection{Dimension 5}

\subsubsection{$\s\l(2,\R) \rtimes \R^2: L_{5.1}$}

\[\small \begin{array}{llll}
\left[\begin{matrix}
a&b&d&\\
d&-a&e\\
0&0&0
\end{matrix}\right]
\end{array} \]

\subsection{Dimension 6}

\subsubsection{$\s\o(3) \rtimes \R^3: L_{6.1}$}

\[\small \begin{array}{llll}
& \left[\begin{matrix}
0&c&-b&d\\
-c&0&a&e\\
b&-a&0&f\\
0&0&0&0
\end{matrix}\right]
\end{array} \]

\subsubsection{$\s\l(2,\R) \rtimes A_{3.1 }: L_{6.2}$}

\[\small \begin{array}{llll}
 \left[\begin{matrix}
a&b&0&d\\
c&-a&0&e\\
e&-d&0&f\\
0&0&0&0
\end{matrix}\right]
\end{array} \]

\subsubsection{$(\lambda\neq \mu)\,  \s\l(2,\R) \rtimes A_{3.3 }: L_{6.3}$}

\[\small  \begin{array}{lll}
\left[\begin{matrix}
a+\lambda f &b&d\\
-c&\lambda f-a&e\\
0&0&(1+\lambda)f
\end{matrix}\right]
\end{array} \]

\subsubsection{$\s\l(2,\R) \rtimes \R^3: L_{6.4}$}

\[ (i)\small \begin{array}{llll}
 \left[\begin{matrix}
2a&2b&0&d\\
c&0&b&e\\
0&2c&-2a&f\\
0&0&0&0
\end{matrix}\right]

(ii) \left[\begin{matrix}
a&b&d&e\\
c&-a&f&-d\\
0&0&a&b\\
0&0&c&-a
\end{matrix}\right]
\end{array} \]

\subsection{Dimension 7}

\subsubsection{$(\lambda\neq\mu)\,\, \s\o(3) \rtimes A_{4.5 a=b=1}: L_{7.1}$}
\[ \small \begin{array}{llll}
& \left[\begin{matrix}
\lambda g&c&-b&d\\
-c&\lambda g&a&e\\
b&-a&\lambda g&f\\
0&0&0&\mu g
\end{matrix}\right]
\end{array} \]

\subsubsection{$\s\o(3) \rtimes \R^4: L_{7.2}$}
\[\small\begin{array}{llll}
\left[\begin{matrix}
0&-a&-b&-c&d\\
a&0&-c&b&e\\
b&c&0&-a&f\\
c&-b&a&0&g\\
0&0&0&0&0
\end{matrix}\right].
\end{array} \]

\subsubsection{$(\lambda\neq \mu,p=\frac{1}{\lambda-\mu}) \s\l(2,\R) \rtimes A_{4.5 a=1} = L_{7.3p}$}
\[\begin{array}{llll}
 & \left[\begin{matrix}
a+\lambda g&b&0&d\\
c&\lambda g-a&0&e\\
0&0&(\mu+1)g&f\\
0&0&0&\mu g
\end{matrix}\right]
\end{array} \]

\subsubsection{$(\lambda\neq \mu) \s\l(2,\R) \rtimes A_{4.9 b=1} = L_{7.4}$}
\[\begin{array}{llll}
 & \left[\begin{matrix}
a+\lambda g&b&0&d\\
c&\lambda g-a&0&e\\
-e&d&\mu g&f\\
0&0&0&(2\lambda-\mu)g
\end{matrix}\right]
\end{array} \]

\subsubsection{$\s\l(2,\R) \rtimes A_{4.5 a=b=1}: L_{7.5}$}

\[ \small (i)\, (\lambda\neq \mu)\, \begin{array}{llll}
\left[\begin{matrix}                                                                                                                                            2a+g\lambda&2c&0&f\\                                                                                                                                              b&g\lambda&c&e\\
0&2b&g\lambda-2a&d\\
0&0&0&g\mu
\end{matrix}\right]

\,(ii)\,  (\lambda\neq 1)\,
\left[\begin{matrix}
a+g&b&-e&d\\
c&g-a&f&e\\
0&0&a+\lambda g&b\\
0&0&c&\lambda g-a                                                                                                                                                                  \end{matrix}\right]
\end{array} \]

\subsubsection{$\s\l(2,\R) \rtimes \R^4: L_{7.6}$}
\[\small\begin{array}{llll}
 & \left[\begin{matrix}
3a&3b&0&0&d\\
c&a&2b&0&e\\
0&2c&-a&b&f\\
0&0&3c&-3a&g\\
0&0&0&0&0\\
\end{matrix}\right]
\end{array} \]

\subsubsection{$\s\l(2,\R) \rtimes \R^4: L_{7.7}$}
\[\begin{array}{llll}
 & \left[\begin{matrix}
a&b&d&f\\
c&-a&e&g\\
0&0&0&0\\
0&0&0&0
\end{matrix}\right]
\end{array} \]

\subsection{Dimension 8}

\subsubsection{$\s\o(3) \rtimes A_{5.7 a=1 b=1 c=p}: L_{8.1}$}

\[\small \left[ \begin {array}{cccccc}
h&c&-b&0&d\\
\noalign{\medskip}-c&h&a&0&e\\
\noalign{\medskip}b&-a&h&0&f\\
\noalign{\medskip}0&0&0&ph&pg\\
\noalign{\medskip}0&0&0&0&0
\end {array} \right]\]

\subsubsection{$\s\o(3) \rtimes A_{5.4}: L_{8.2}$}

\[\small \left[ \begin {array}{cccccc}
0&a&b&c&0&e\\\noalign{\medskip}-a&0&-c&
b&0&f\\\noalign{\medskip}-b&c&0&-a&0&d\\\noalign{\medskip}-c&-b&a&0&0&
g\\\noalign{\medskip}f&-e&g&-d&0&h\\\noalign{\medskip}0&0&0&0&0&0
\end {array} \right]\]

\subsubsection{$\s\o(3) \rtimes A_{5.7 a=b=c=1}: L_{8.3}$}

\[\small  \left[ \begin {array}{ccccc} 0&-\frac{b}{2}&-\frac{c}{2}&-\frac{a}{2}&d\\ \noalign{\medskip} \frac{b}{2}&0&-\frac{a}{2}&\frac{c}{2}&e\\
\noalign{\medskip}\frac{c}{2}&\frac{a}{2}&0&-\frac{b}{2}&f
\\ \noalign{\medskip}\frac{a}{2}&-\frac{c}{2}&\frac{b}{2}&0&g\\ \noalign{\medskip}0&0&0&0&h
\end {array} \right]\]

\subsubsection{$\s\o(3) \rtimes A_{5.17 a=1 b=1 c=p}: L_{8.4}$}

\[\small \left[ \begin {array}{cccccc}
ph&b&c+h&a&d\\
\noalign{\medskip}-b&ph&a&h-c&e\\
\noalign{\medskip}-c-h&-a&ph&b&f\\
\noalign{\medskip}-a&c-h&-b&ph&g\\
\noalign{\medskip}0&0&0&0&0
\end {array} \right]\]

\subsubsection{$\s\l(2,\R) \rtimes \R^5: L_{8.5}$}

\[\small \left[ \begin {array}{cccccc}
0&2c&4b&4a&0&d\\\noalign{\medskip}-2c&0&-4a&4b
&0&e\\\noalign{\medskip}-b&a&0&c&-6a&f\\\noalign{\medskip}-a&-b&-c&0&-6b&g
\\\noalign{\medskip}0&0&2a&2b&0&h\\\noalign{\medskip}0&0&0&0&0&0
\end {array} \right]\]

\subsubsection{$\s\l(2,\R) \rtimes A_{5.4}: L_{8.6}$}

\[\small \left[ \begin {array}{cccccc}
a&b&0&0&0&d\\\noalign{\medskip}c&-a&0&0
&0&e\\\noalign{\medskip}0&0&0&0&0&f\\\noalign{\medskip}0&0&0&0&0&g
\\\noalign{\medskip}e&-d&g&-f&0&h\\\noalign{\medskip}0&0&0&0&0&0
\end {array} \right]\]

\subsubsection{$\s\l(2,\R) \rtimes A_{5.7 pq\neq0}: L_{8.7}$}

\[\small \left[ \begin {array}{ccccc} h+a&b&0&0&d\\ \noalign{\medskip}c&h-a&0&0
&e\\ \noalign{\medskip}0&0&ph&0&f\\ \noalign{\medskip}0&0&0&
qh&g\\ \noalign{\medskip}0&0&0&0&0\end {array} \right]\]

\subsubsection{$\s\l(2,\R) \rtimes (A_{5.8 p=1},A_{5.9 \frac{1}{p}\frac{1}{p} p\neq0}): L_{8.8}$}

\[\small \left[ \begin {array}{ccccc} a+h&b&0&0&d\\ \noalign{\medskip}c&h-a&0&0
&e\\ \noalign{\medskip}0&0&ph&h&f\\ \noalign{\medskip}0&0&0&
ph&g\\ \noalign{\medskip}0&0&0&0&0\end {array} \right]\]

\subsubsection{$\s\l(2,\R) \rtimes (A_{5.13 a=1 q\neq0}): L_{8.9}$}

\[\small \left[ \begin {array}{ccccc} a+h&b&0&0&d\\ \noalign{\medskip}c&h-a&0&0
&e\\ \noalign{\medskip}0&0&ph&qh&f\\ \noalign{\medskip}0&0&-qh&
ph&g\\ \noalign{\medskip}0&0&0&0&0\end {array} \right]\]

\subsubsection{$\s\l(2,\R) \rtimes (A_{5.19 a=2\, p\neq0}): L_{8.10}$}

\[\small \left[\begin{array}{ccccc} a+h&b&0&0&d\\ \noalign{\medskip}c&h-a&0&0
&e\\ \noalign{\medskip}-e&d&2h&0&f\\ \noalign{\medskip}0&0&0&
ph&pg\\ \noalign{\medskip}0&0&0&0&0\end {array} \right]\]

\subsubsection{$\s\l(2,\R) \rtimes (A_{5.20 a=2\, p\neq0}): L_{8.11}$}

\[\small \left[\begin{array}{ccccc} a+h&b&0&0&d\\ \noalign{\medskip}c&h-a&0&0
&e\\ \noalign{\medskip}-e&d&2h&h&f\\ \noalign{\medskip}0&0&0&
2h&g\\ \noalign{\medskip}0&0&0&0&0\end {array} \right]\]

\subsubsection{$\s\l(2,\R)\rtimes A_{5.7 b=c}: L_{8.12 p=1}$}

\[\small\begin{array}{llll}
\left[\begin{matrix}
(\lambda+1)d+a&b&e&f\\
c&(\lambda+1)d-a&g&h\\
0&0&\lambda d+a&b\\
0&0&c&\lambda d-a\\
\end{matrix}\right].
\end{array} \]

\subsubsection{$\s\l(2,\R)\rtimes A_{5.7 b=c}: L_{8.12p}$}

\[\small\begin{array}{llll}
\left[\begin{matrix}
h+2a&2b&0&0&d\\
c&h&b&0&e\\
0&2c&h-2a&0&f\\
0&0&0&ph&g\\
0&0&0&0&0\\
\end{matrix}\right].
\end{array} \]

\subsubsection{$\s\l(2,\R) \rtimes A_{5.4}: L_{8.13 \epsilon=1}$}

\[\small \left[ \begin {array}{cccccc}
a&b&0&0&0&d\\\noalign{\medskip}c&-a&0&0
&0&e\\\noalign{\medskip}0&0&a&b&0&f\\\noalign{\medskip}0&0&c&-a&0&g
\\\noalign{\medskip}e&-d&g&-f&0&h\\\noalign{\medskip}0&0&0&0&0&0
\end {array} \right]\]

\subsubsection{$\s\l(2,\R) \rtimes A_{5.4}: L_{8.13 \epsilon=-1}$}

\[\small\begin{array}{llll}
\left[\begin{matrix}
a&b&0&d\\
c&-a&0&e\\
f&g&0&h\\
0&0&0&0
\end{matrix}\right]
\end{array}\]
\noindent To obtain the form used by Turkowski \cite{Turk} use:
\[\small\begin{array}{llll}
\left[\begin{matrix}
a&b&0&d+g\\
c&-a&0&e+f\\
2(f-e)&2(d-g)&0&4h\\
0&0&0&0
\end{matrix}\right]
\end{array}\]


\subsubsection{$\s\l(2,\R)\rtimes A_{5.1}=L_{8.14}$}

\[\begin{array}{llll}
\small & \left[\begin{matrix}
a&b&f&d\\
c&-a&g&e\\
0&0&0&h\\
0&0&0&0
\end{matrix}\right]
\end{array}\]

\subsubsection{$\s\l(2,\R)\rtimes A_{5.3}=L_{8.15}$}


\[ \left[ \begin {matrix}a&b&-h&0&d&e\\ \noalign{\medskip}c&-a
&0&-h&f&g\\ \noalign{\medskip}0&0&a&b&0&d\\ \noalign{\medskip}0&0&c&
-a&0&f\\ \noalign{\medskip}0&0&f&-d&0&2h\\ \noalign{\medskip}0&0&0&0&0&0
\end {matrix}\right]\]

\subsubsection{$\s\l(2,\R)\rtimes A_{5.15 a=1}: L_{8.16}$}

\[ \left[ \begin {array}{cccc} a&b&d&e\\ \noalign{\medskip}c&-a&f&g
\\ \noalign{\medskip}0&0&h&h\\ \noalign{\medskip}0&0&0&h\end {array}
 \right]\]

\subsubsection{$\s\l(2,\R)\rtimes A_{5.7 b=c}: L_{8.17} $}


\[\left[ \begin {array}{cccc} \lambda\,h+a&b&d&e\\ \noalign{\medskip}c&
\lambda\,h-a&f&g\\ \noalign{\medskip}0&0& \left( 1+\lambda \right) h&0
\\ \noalign{\medskip}0&0&0& \left( \lambda+p\right) h\end {array}
 \right]\]

\subsubsection{$\s\l(2,\R)\rtimes A_{5.17 p=q=s=1}: p=\mu-\lambda\, L_{8.18}$}

\[\begin{array}{llll}
\small & \left[\begin{matrix}
\lambda d+a&b&e&f\\
c&\lambda d-a&g&h\\
0&0&\mu d&d\\
0&0&-d&\mu d
\end{matrix}\right]
\end{array} \]

\subsubsection{$\s\l(2,\R)\rtimes \R^5:L_{8.19}$}

\[\small \left[ \begin{matrix} 3\,a&3\,b&0&0&0&{\it d}
\\ \noalign{\medskip}c&a&2\,b&0&0&e\\ \noalign{\medskip}0&2\,c&-a&b&0&
f\\ \noalign{\medskip}0&0&3\,c&-3\,a&0&g\\ \noalign{\medskip}-g&3\,f&-
3\,e&{\it d}&0&2\,h\\ \noalign{\medskip}0&0&0&0&0&0\end {matrix}
 \right]\]

\subsubsection{$\s\l(2,\R)\rtimes \R^5:L_{8.20}$}

\[\small \left[ \begin{matrix} h+3\,a&3\,b&0&0&{\it d}
\\ \noalign{\medskip}c&h+a&2\,b&0&e\\ \noalign{\medskip}0&2\,c&h-a&b&
f\\ \noalign{\medskip}0&0&3\,c&h-3\,a&g\\
\noalign{\medskip}0&0&0&0&0\end {matrix}\right]\]

\subsubsection{$\s\l(2,\R)\rtimes \R^5:L_{8.21}$}

\[\small\left[ \begin {matrix} 4a&4b&0&0&0&d\\ \noalign{\medskip}c&2a&3b&0&0&
e\\ \noalign{\medskip}0&2c&0&2b&0&f\\ \noalign{\medskip}0&0&3c&-2a&b&g
\\ \noalign{\medskip}0&0&0&4c&-4a&h\\0&0&0&0&0&0\end {matrix} \right]\]

\subsubsection{$\s\l(2,\R)\rtimes \R^5:L_{8.22}$}

\[\small\left[ \begin {matrix} a&b&d&e&g\\ \noalign{\medskip}c&-a&f&-d&
h\\ \noalign{\medskip}0&0&a&b&0\\ \noalign{\medskip}0&0&c&-a&0
\\ \noalign{\medskip}0&0&0&0&0\end {matrix} \right]\]

\section{Acknowledgment}

The authors thank the Qatar Foundation and Virginia Commonwealth University in Qatar for funding this project.

\end{document}